\newtheorem{theorem}{Theorem}[section]
\newtheorem{lemma}[theorem]{Lemma}
\newtheorem{proposition}[theorem]{Proposition}
\newtheorem{corollary}[theorem]{Corollary}
\begin{document} 

\title{On the largest prime divisor of polynomial and related problem}
\author{Thanh Nguyen Cung}
\address{Thanh Nguyen Cung, College of Engineering and Computer Science, VinUniversity, Vietnam}
\email{24thanh.nc@vinuni.edu.vn}

\author{Son Duong Hong} 
\address{Son Duong Hong, Department of Mathematics, University of Warwick, Coventry, United Kingdom }
\email{sonhong.duong101005@gmail.com}

\date{} 
\maketitle

\begin{abstract}
We denote $\mathcal{P}$ = $\{P(x)|$ $P(n) \mid n!$ for infinitely many $n\}$. This article identifies some polynomials that belong to $\mathcal{P}$. Additionally, we also denote $P^+(m)$ as the largest prime factor of $m$. Then, a consequence of this work shows that there are infinitely many $n \in \mathbb{N}$ so that $P^+(f(n)) < n^{\frac{3}{4}+\varepsilon}$ if $f(x)$ is cubic polynomial, $P^+(f(n)) < n$ if $f(x)$ is reducible quartic polynomial and $P^+(f(n)) < n^{\varepsilon}$ if $f(x)$ is Chebyshev polynomial.
\end{abstract} 

\section{Introduction}

It was conjectured in 1857 by Bunyakovsky \cite{bunyakovsky} that an irreducible polynomial $f$ over $\mathbb{Z}$, whose values have no fixed prime divisor will take infinitely many values of $n$ for which $f(n)$ is a prime number. This remains an open problem in mathematics for the case where
$\deg f \ge 2$. Therefore, we will consider a simpler aspect—the largest prime factor of a polynomial, denoted by
$P^+(m)$ as the largest prime factor of $m.$ There are many results on this topic for quadratic and cubic polynomials.  Keates\cite[Theorem 1]{keates} obtain that if $f(x) \in \mathbb{Z}[x]$ is a quadratic or cubic polynomial and distinct roots. Then for all $n$, sufficiently large in absolute value $$P^+(f(n)) > 10^{-7} \log\log n.$$ 
Jori \cite{jori} has shown that $$P^+(n^2+1) > n^{1.279}$$ and 
Hooley \cite{hooley} stated that $$P^+(n^3+2) > n^{1+\frac{1}{30}}$$ holds infinitely many positive integers $n.$ In general, for irreducible polynomials of degree $\deg f \ge 2$ $$P^+(f(n)) > ne^{(\log n)^{\varepsilon}}$$ as proved in \cite{tenenbaum}. Next, the question arises of how small the largest prime factor of a polynomial can be for suitable $n$. In \cite{schinzel}, Schinzel stated that (see \cite{bober}) for $f(x) \in \mathbb{Z}[x]$ of degree $d$, $$P^+(f(n)) < n^{d\theta(d)}$$ for infinitely many positive integers $n$ where  $$\theta(2) = 0.279..., \theta(3) = 0.381..., \theta(4) = 0.559...$$ and for large $d$ $$\theta(d) = 1 - \dfrac{1}{d-2} + O\bigg(\dfrac{1}{d^3}\bigg).$$ 
Schinzel also conjectured that for each $\varepsilon > 0$ and $f(x) \in \mathbb{Z}[x]$, there exist infinitely many $n$ such that $$P^+(f(n)) < n^{\varepsilon.}$$ 
In \cite{bober}, the authors also proved for the case $f(x)$ is a quadratic polynomial, and the authors in \cite{balog} proved for the case $f(x)$ is the product of binomials. Originating from the techniques and proofs in \cite{bober} and \cite{schinzel}, in this paper, we will prove the statement for the case where $f(x)$ is a Chebyshev-type polynomial, as well as improve Schinzel’s result for cubic and reducible quartic polynomials. Additionally, we consider $\mathcal{P}$ = $\{P(x)|$ $P(n) \mid n!$ for infinitely many $n\}$ and identify some polynomials that belong to $\mathcal{P}.$ This means that for each polynomial $f \in \mathcal{P},$ we always have $P^+(f(n)) < n$ for infinite positive integer $n.$    

\section{Preliminaries} 

In this section, we present some of the theorems and lemmas that are fundamental to this article. The first theorem is quite well-known.  
\begin{theorem}
(Schur \cite{schur}). Every nonconstant polynomial $f(x) \in \mathbb{Z}[x]$ has an infinite number of prime divisors. 
\end{theorem}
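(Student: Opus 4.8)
The plan is to argue in the style of Euclid's proof of the infinitude of primes, manufacturing from any finite list of prime divisors of $f$ a brand new one. Call a prime $p$ a \emph{prime divisor} of $f$ if $p \mid f(n)$ for some integer $n$. First I would dispose of the degenerate case $f(0) = 0$: here $x \mid f(x)$, so $p \mid f(p)$ for every prime $p$, and \emph{every} prime is a prime divisor. Hence I may assume $c := f(0) \neq 0$.

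The key device is a normalization that makes the constant term a unit. Writing $f(x) = c + a_1 x + \cdots + a_d x^d$, I would pass to
$$g(x) := \frac{f(cx)}{c} = 1 + a_1 x + a_2 c\, x^2 + \cdots + a_d c^{d-1} x^d,$$
which lies in $\mathbb{Z}[x]$, is nonconstant, and has constant term $1$. Now suppose toward a contradiction that $f$ has only finitely many prime divisors $p_1, \ldots, p_k$, and put $Q := p_1 \cdots p_k$. Evaluating $g$ at $Qm$ for an integer $m$, every nonconstant term of $g(Qm)$ is divisible by $Q$, so
$$g(Qm) \equiv 1 \pmod{Q},$$
whence $\gcd(g(Qm), Q) = 1$.

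Since $g$ is nonconstant, $|g(Qm)| \to \infty$, so for $m$ large enough $|g(Qm)| > 1$ and $g(Qm)$ admits a prime factor $q$. The congruence above forces $q \nmid Q$, so $q \notin \{p_1, \ldots, p_k\}$. On the other hand $c\, g(Qm) = f(cQm)$, so $q \mid g(Qm)$ gives $q \mid f(cQm)$, making $q$ a prime divisor of $f$ after all. This contradicts the supposed completeness of the list $p_1, \ldots, p_k$ and finishes the argument.

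The one genuinely delicate point — and the step I would guard most carefully — is the normalization that pins the constant term to $1$. Without it, a prime factor of $f(Qm)$ could perfectly well be one of the $p_i$ that divides $c$, and the Euclidean coprimality would collapse. Substituting $x \mapsto cx$ and then dividing by $c = f(0)$ is precisely what secures $g(Qm) \equiv 1 \pmod Q$, and hence that the prime $q$ produced is honestly new; the remaining ingredients (integrality of $g$ and the growth $|g(Qm)| \to \infty$) are routine.
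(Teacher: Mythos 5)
Your proof is correct. Note, however, that the paper does not prove this statement at all: it records it as a known result, citing Schur's 1912 paper, and uses it as a black box (e.g.\ in the quadratic case, to pick a large prime divisor of $Q(x)$). So there is no in-paper argument to compare against. What you have written is the classical textbook proof of Schur's theorem: the reduction to constant term $1$ via $g(x) = f(cx)/c$, the Euclid-style evaluation $g(Qm) \equiv 1 \pmod{Q}$ at multiples of the product $Q$ of the allegedly complete list of prime divisors, and the growth bound $|g(Qm)| > 1$ forcing a genuinely new prime $q \mid f(cQm)$. All steps check out, including the degenerate case $f(0) = 0$; the only unstated edge case is that the finite list of primes might a priori be empty, but that is harmless (a nonconstant polynomial takes some value of absolute value exceeding $1$, and the argument goes through with the empty product $Q = 1$ anyway). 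Your proof would serve as a self-contained replacement for the citation.
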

The second theorem is Legendre's formula giving an expression for the exponent of the largest power of a prime $p$ that divides the factorial $n!.$
\begin{theorem}
(Legendre \cite{legendre}). For any prime number $p$ and any positive integer $n$, let $\nu_p(n)$ be the p-adic valuation of $n$. Then we have $$\nu_p(n!) = \sum_{i=1}^{\infty} \bigg\lfloor \dfrac{n}{p^i} \bigg\rfloor,$$ where $\lfloor x \rfloor$ is the floor function. 
\end{theorem}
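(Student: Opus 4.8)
The plan is to prove Legendre's formula by reducing $\nu_p(n!)$ to a double sum and then interchanging the order of summation. The starting point is that the $p$-adic valuation is completely additive on products: writing $n! = \prod_{k=1}^{n} k$, the identity $\nu_p(ab) = \nu_p(a) + \nu_p(b)$ gives
$$\nu_p(n!) = \sum_{k=1}^{n} \nu_p(k).$$

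Next I would rewrite each term $\nu_p(k)$ as a count over prime powers. For a positive integer $k$, the exponent $\nu_p(k)$ is precisely the number of indices $i \ge 1$ for which $p^i$ divides $k$; that is, $\nu_p(k) = \sum_{i=1}^{\infty} c_i(k)$, where $c_i(k) = 1$ if $p^i \mid k$ and $c_i(k) = 0$ otherwise. Since $p^i > k$ once $i$ is large, only finitely many terms are nonzero, so the sum is well defined. Substituting and exchanging the two summations yields
$$\nu_p(n!) = \sum_{k=1}^{n} \sum_{i=1}^{\infty} c_i(k) = \sum_{i=1}^{\infty} \sum_{k=1}^{n} c_i(k) = \sum_{i=1}^{\infty} \# \{ 1 \le k \le n : p^i \mid k \}.$$

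The final step is the elementary counting fact that the number of multiples of $p^i$ in the range $1 \le k \le n$ equals $\lfloor n / p^i \rfloor$: these multiples are $p^i, 2p^i, \ldots, \lfloor n/p^i \rfloor \, p^i$, and there are exactly $\lfloor n/p^i \rfloor$ of them. Inserting this into the displayed sum gives $\nu_p(n!) = \sum_{i=1}^{\infty} \lfloor n/p^i \rfloor$, as claimed. The argument is entirely elementary, so there is no serious obstacle; the only point deserving care is the interchange of summation, which is justified because every term $c_i(k)$ is nonnegative (equivalently, because for each fixed $k$ the inner sum has only finitely many nonzero terms). An alternative route would be induction on $n$, checking that the increment $\nu_p((n+1)!) - \nu_p(n!) = \nu_p(n+1)$ matches the increase in $\sum_i \lfloor n/p^i \rfloor$ when $n$ is replaced by $n+1$, but the double-counting argument is cleaner and avoids case analysis.
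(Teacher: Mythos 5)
Your proof is correct. Note, however, that the paper does not prove this statement at all: it appears as Theorem 2.2 in the Preliminaries, cited directly to Legendre as a classical result, so there is no in-paper argument to compare yours against. Your double-counting argument is the standard textbook proof and is complete: the additivity $\nu_p(n!) = \sum_{k=1}^{n} \nu_p(k)$, the rewriting $\nu_p(k) = \#\{i \ge 1 : p^i \mid k\}$, the interchange of the two (nonnegative, effectively finite) sums, and the count $\#\{1 \le k \le n : p^i \mid k\} = \lfloor n/p^i \rfloor$ are all justified as you state; your remark that the interchange is harmless because each inner sum has finitely many nonzero terms is exactly the right point of care. Nothing further is needed.
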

Next, we introduce the definition of cyclotomic polynomials and Chebyshev polynomials of the first kind, along with some of their properties.
\begin{itemize}
\item \textbf{Cyclotomic polynomial} \cite{lang}. For each positive integer $n$, the $n^{\text{th}}$ cyclotomic polynomial is defined by the formula 
$$\Phi_{n}(x)=\prod_{\substack{k\le n\\\gcd(k,n)=1}}\left(x-e^{\frac{2k\pi i}{n}}\right)$$
In this article, we only need to consider three important properties of the cyclotomic polynomial, which are:
\begin{itemize}
\item For each positive integer $n$, the $n^{\text{th}}$  cyclotomic polynomial has a degree of $\varphi(n)$.
\item Every cyclotomic polynomial is monic and has integer coefficients.
\item For each positive integer $n$, we always have
    $$x^n-1=\prod_{d \mid n}\Phi_{d}(x).$$ 
\end{itemize} 
\item \textbf{Chebyshev polynomial} \cite{masakazu, rayes}. The Chebyshev polynomials of the first kind are obtained from the recurrence relation: 
\begin{align*}
&T_0(x) = 1 \\
&T_1(x) = x \\
&T_{n+1}(x) = xT_n(x) - T_{n-1}(x)
\end{align*}
In this article, we only need to consider two important properties of the Chebyshev polynomial, which are: 
\begin{itemize}
    \item $T_{mn}(x) = T_m(T_n(x)).$ \\
    \item $2T_n\Big(\dfrac{x}{2}\Big) = \prod_{\substack{d \mid n \\ n/d: \text{odd}}}\psi_{4d}(x)$ where $\psi_n(x) \in \mathbb{Z}[x]$ is the unique polynomial such that $\psi_n(x+x^{-1}) = x^{-\varphi(n)/2}\cdot \Phi_n(x) .$
\end{itemize} 
\end{itemize}
Additionally, we also need the following theorem of Mertens.  
\begin{theorem} (Mertens's third theorem \cite{mertens})
$$\lim_{n \to \infty } \log n\prod\limits_{\substack{p: \text{ prime} \\ p \le n}} \left(1 - \dfrac{1}{p}\right) = e^{-\gamma}.$$ where $\gamma$ is the Euler-Mascheroni constant \cite{euler}. 
\end{theorem}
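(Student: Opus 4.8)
The plan is to prove the equivalent asymptotic
$$\prod_{p \le n}\left(1 - \frac{1}{p}\right) = \frac{e^{-\gamma}}{\log n}\,(1 + o(1)),$$
the stated limit being immediate after multiplying by $\log n$. I would separate the argument into an \emph{elementary} part that fixes the order of magnitude $1/\log n$ and an \emph{analytic} part that pins the constant to $e^{-\gamma}$; the latter is where the real difficulty lies.

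First I would establish Mertens's first theorem, $\sum_{p \le n}\frac{\log p}{p} = \log n + O(1)$, from Legendre's formula and Stirling's approximation. Writing $\log(n!) = \sum_{p \le n}\sum_{k \ge 1}\big\lfloor n/p^{k}\big\rfloor\log p$ and comparing with $\log(n!) = n\log n - n + O(\log n)$, the prime powers with $k \ge 2$ contribute only $O(n)$, so after dividing by $n$ and replacing $\lfloor n/p\rfloor$ by $n/p + O(1)$ the estimate drops out. A partial (Abel) summation then upgrades this to Mertens's second theorem, $\sum_{p \le n}\frac{1}{p} = \log\log n + M + o(1)$, for some constant $M$.

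Next, taking logarithms of the product and expanding each factor,
$$\sum_{p \le n}\log\left(1 - \frac{1}{p}\right) = -\sum_{p \le n}\frac{1}{p} - \sum_{p \le n}\sum_{k \ge 2}\frac{1}{k\,p^{k}},$$
where the double sum converges as $n \to \infty$ to $C := \sum_{p}\sum_{k \ge 2}\frac{1}{k\,p^{k}}$. Inserting the second theorem and exponentiating gives
$$\prod_{p \le n}\left(1 - \frac{1}{p}\right) = \frac{e^{-(M+C)}}{\log n}\,(1 + o(1)),$$
so the whole problem collapses to the single identity $M + C = \gamma$.

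The main obstacle is precisely this identity: the elementary steps locate the constant only up to the undetermined $M$, and the value $\gamma$ must be imported from the harmonic series, where it lives as $\gamma = \lim_{N}\big(\sum_{m \le N}\tfrac{1}{m} - \log N\big)$. The cleanest bridge I know uses the Euler product $\zeta(s) = \prod_p (1-p^{-s})^{-1}$ on the half-line $s > 1$. On one side, $\log\zeta(s) = \sum_p p^{-s} + \sum_p\sum_{k \ge 2}\frac{1}{k\,p^{ks}}$ with the second term tending to $C$, while the simple pole of $\zeta$ with residue $1$ at $s = 1$ gives $\log\zeta(s) = -\log(s-1) + o(1)$; hence $\sum_p p^{-s} = -\log(s-1) - C + o(1)$. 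On the other side, Abel summation of the second theorem yields $\sum_p p^{-s} = -\log(s-1) + M - \gamma + o(1)$ as $s \to 1^{+}$, the constant $-\gamma$ emerging from the integral $\int_0^\infty (\log u)\,e^{-u}\,du = \Gamma'(1) = -\gamma$. Matching the two expansions forces $M + C = \gamma$, completing the proof. I expect this matching to be the delicate point: it is the only place demanding a genuinely global analytic input (the residue and behaviour of $\zeta$ at its pole, equivalently the Euler constant through $\Gamma'(1)$), and some care is required to justify interchanging the limit $s \to 1^+$ with the discrete cutoff $n \to \infty$, for which the Abel-summation / Tauberian passage must be carried out rigorously.
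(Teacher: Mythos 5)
The paper does not prove this statement at all: Mertens's third theorem appears there as a quoted classical result with a citation to Mertens's 1874 paper, and it is used purely as a black box (only through Corollary 2.4). So there is no proof in the paper to compare yours against; your proposal must stand on its own, and it does. Your outline is essentially the classical textbook proof: Mertens's first theorem $\sum_{p\le n}\frac{\log p}{p}=\log n+O(1)$ from Legendre's formula and Stirling, Abel summation to get $\sum_{p\le n}\frac1p=\log\log n+M+o(1)$, expansion of $\log\prod_{p\le n}(1-1/p)$ to reduce the whole theorem to the identity $M+C=\gamma$, and the determination of that constant by matching two expansions of $\sum_p p^{-s}$ as $s\to1^+$ — one via the Euler product and the simple pole of $\zeta$ at $s=1$, the other via Abel summation against the second theorem, with $\gamma$ entering through $\int_0^\infty e^{-u}\log u\,du=\Gamma'(1)=-\gamma$. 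All of these steps are sound, and you correctly identify the only delicate point: justifying that the $o(1)$ error term $\epsilon(x)$ in the second theorem contributes $o(1)$ to $(s-1)\int_2^\infty \epsilon(x)x^{-s}\,dx$, which is handled by splitting the integral at a large fixed height. One terminological quibble: that passage is Abelian, not Tauberian — you are going from partial-sum asymptotics to the behaviour of the Dirichlet series, which is the easy direction and requires no Tauberian hypothesis; reserving the word ``Tauberian'' for the converse direction will keep a referee from pausing there.
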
 
From here, we see that a direct consequence of the theorem is as follows.
\begin{corollary} \label{corollary 2.4}
$$\lim_{n \to \infty }\prod\limits_{i=m}^{n} \left(\dfrac{p_i}{p_i-1}\right) = \infty.$$ where $p_i$ is the $i^{\text{th}}$ prime and $m$ is arbitrary positive integer. 
\end{corollary}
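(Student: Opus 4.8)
The plan is to rewrite the product in terms of the quantity appearing in Mertens's third theorem and then invoke that theorem directly. First I would observe that each factor can be written as
$$\frac{p_i}{p_i - 1} = \left(1 - \frac{1}{p_i}\right)^{-1},$$
so that
$$\prod_{i=m}^{n} \frac{p_i}{p_i-1} = \left(\prod_{i=m}^{n}\left(1 - \frac{1}{p_i}\right)\right)^{-1}.$$
It therefore suffices to show that the product $\prod_{i=m}^{n}\left(1 - 1/p_i\right)$ tends to $0$ as $n \to \infty$.

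I would first treat the case $m = 1$. Since $p_1, \dots, p_n$ are exactly the primes not exceeding $p_n$, the finite product equals $\prod_{p \le p_n}\left(1 - 1/p\right)$, which is precisely the product appearing in Mertens's third theorem with upper bound $N = p_n$. As $n \to \infty$ we have $p_n \to \infty$, so Mertens's theorem yields
$$\prod_{i=1}^{n}\left(1 - \frac{1}{p_i}\right) = \prod_{p \le p_n}\left(1 - \frac{1}{p}\right) \sim \frac{e^{-\gamma}}{\log p_n} \longrightarrow 0.$$
Taking reciprocals then gives $\prod_{i=1}^{n} p_i/(p_i-1) \to \infty$, which is the claim for $m = 1$.

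For an arbitrary but fixed $m$, I would factor the partial product as
$$\prod_{i=m}^{n}\frac{p_i}{p_i-1} = \left(\prod_{i=1}^{m-1}\frac{p_i}{p_i-1}\right)^{-1} \prod_{i=1}^{n}\frac{p_i}{p_i-1}.$$
The first factor is a fixed positive constant depending only on $m$ (an empty product equal to $1$ when $m = 1$), while the second diverges to infinity by the case just established; hence the whole expression diverges, as required.

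The argument is essentially routine, so I do not expect a serious obstacle. The only point requiring care is the bookkeeping that identifies the product over the first $n$ primes with the Mertens product over all primes up to $p_n$, together with the observation that $\log p_n \to \infty$ forces the Mertens product to $0$ rather than to the finite nonzero limit $e^{-\gamma}$ that one might naively read off if one dropped the factor $\log n$.
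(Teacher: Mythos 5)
Your proof is correct and follows essentially the same route as the paper, which presents the corollary as a direct consequence of Mertens's third theorem: invert the product $\prod_{p \le p_n}\bigl(1 - \tfrac{1}{p}\bigr) \sim e^{-\gamma}/\log p_n \to 0$ and note that discarding the finitely many factors with index below $m$ cannot affect divergence. The bookkeeping points you flag (identifying the first $n$ primes with all primes up to $p_n$, and the role of $\log p_n \to \infty$) are exactly the content implicit in the paper's one-line deduction.
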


\section{Main result} 

Before proceeding to the main result, we observe that for polynomials $P(x)$ of degree at most 4, we will only consider those with positive integer coefficients. Indeed, if the leading coefficient of $P(x)$ is negative, we will instead consider the polynomial $-P(x).$ Moreover, expressing $P(x) = \sum\limits_{i = 0}^{d}a_ix^i,$ we obtain 
\begin{align*}
P(x+y) &= \sum\limits_{i = 0}^{d}a_i(x+y)^i \\
&= \sum\limits_{i = 0}^{d}a_i \sum\limits_{j = 0}^{i}\binom{i}{j}x^{j}y^{i-j}. \\
&= \sum\limits_{i = 0}^{d} \Big(a_d\binom{d}{i}y^{d-i}+Q_i(y)\Big)x^i 
\end{align*}
where $\deg (Q_i(y)) < d-i$ so the coefficient of $x^i$ is a polynomial in $y$ whose leading coefficient is positive (note that $a_d > 0$). Thus, for sufficiently large $y$, the coefficient of $x^i$ will be positive. In this case, we will demonstrate the existence of infinitely many positive integers $n$ such that $P(n+y) \mid n!.$ For this problem, we also observe that it suffices for the greatest proper divisor of $P(n)$ to be smaller than $n$ to achieve the desired satisfaction. Besides this observation, we also need the following two important lemmas as a foundation for the theorems below.

\begin{lemma} \label{lemma 3.1}
Given polynomials $P_1(x),P_2(x),\ldots , P_m(x) \in \mathbb{Z}[x]$ with degrees not exceeding $d$, and polynomial $Q(x) \in \mathbb{Z}[x]$ with positive leading coefficient, degree not less than $d+1.$ Then, there exists $N$ such that for all $n > N,$ we have $$P_1(n) P_2(n)\cdots P_m(n) \mid Q(n)!.$$ 
\end{lemma}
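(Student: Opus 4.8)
The plan is to reduce the divisibility $P_1(n)\cdots P_m(n)\mid Q(n)!$ to a prime-by-prime comparison of $p$-adic valuations, and then to exploit the gap of at least one in the degrees of $Q$ and the $P_j$. Since divisibility of integers is detected by the valuations $\nu_p$ at every prime $p$ (and is insensitive to sign), it suffices to prove that for all sufficiently large $n$,
\[
\nu_p\Big(\prod_{j=1}^{m} P_j(n)\Big) \;\le\; \nu_p\big(Q(n)!\big) \qquad \text{for every prime } p,
\]
where the right-hand side is evaluated by Legendre's formula (Theorem 2.2). First I would note that for $n$ large each $P_j(n)\neq 0$, since a nonzero polynomial has only finitely many roots, and that $Q(n)$ is a positive integer tending to infinity because $Q$ has positive leading coefficient; hence both sides are well defined.

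Next I would record the two size bounds that drive the whole argument. Because $\deg P_j \le d$, there is a constant $C$ with $|P_j(n)| \le C n^{d} =: B$ for every $j$ and all large $n$; because $\deg Q \ge d+1$ with positive leading coefficient, there is $c>0$ with $Q(n) \ge c\,n^{d+1}$ for all large $n$. The key structural observation is that any prime $p$ dividing the product satisfies $p \le |P_j(n)| \le B$ for some $j$. Consequently, for every prime $p > B$ the valuation $\nu_p(\prod_j P_j(n))$ vanishes and the desired inequality is trivial; in particular no prime factor of the product comes anywhere near $Q(n)$.

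It remains to handle primes $p \le B$. For these, $p^{\nu_p(P_j(n))} \le |P_j(n)| \le B$ yields $\nu_p(P_j(n)) \le \log_p B$, so that
\[
\nu_p\Big(\prod_{j=1}^m P_j(n)\Big) \;\le\; \frac{m\log B}{\log p} \;\le\; \frac{m\log B}{\log 2} \;=\; O(\log n),
\]
uniformly in $p$. On the other side, Legendre's formula gives $\nu_p(Q(n)!) \ge \lfloor Q(n)/p \rfloor \ge Q(n)/p - 1 \ge Q(n)/B - 1 \ge (c/C)\,n - 1$, using $p \le B$ together with the two size bounds. Thus the lower bound for $\nu_p(Q(n)!)$ grows linearly in $n$ while the upper bound for the product's valuation grows only logarithmically, so there is a threshold $N$, independent of $p$, past which $(c/C)n - 1 \ge m\log B/\log 2$, establishing the inequality for all primes at once.

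The conceptual heart of the argument, and the only place any care is needed, is this clash of growth rates, which is exactly where the hypothesis $\deg Q \ge d+1$ enters: the one extra degree makes $Q(n)/B$ of order $n$, dwarfing the $O(\log n)$ contribution that the bounded-degree factors can make at any single prime. I do not anticipate a genuine obstacle beyond bookkeeping of the constants $C$, $c$, and the uniform threshold $N$; the naive danger—a prime factor of the product exceeding $Q(n)$—is eliminated at the outset by the bound $p \le B < Q(n)$ valid for large $n$.
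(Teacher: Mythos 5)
Your proposal is correct and follows essentially the same route as the paper's proof: compare $\nu_p$ of the product (bounded by $O(\log n)$ since any prime dividing it is at most $\max_j |P_j(n)| = O(n^d)$) against Legendre's lower bound $\nu_p(Q(n)!) \ge \lfloor Q(n)/p\rfloor \gg n$, and let linear growth beat logarithmic growth. Your write-up is in fact tidier than the paper's—making explicit the nonvanishing of the $P_j(n)$, the bound $p \le B$, and the uniformity of the threshold $N$ in $p$, where the paper leans on loose big-$O$ manipulations—but the underlying argument is identical.
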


\begin{proof}
Denote $P(x) = P_1(x)P_2(x) \cdots P_m(x)$. Since $P_i(x)$ are polynomials with degrees not exceeding $d$, it follows that $$P_i(x) = O(x^d) \quad \forall i = \overline{1,m}.$$ 
Then, for each prime number $p$ is a divisor of $P(n)$,  we have the following estimate 
\begin{equation}
    \nu_p(P(n)) = \sum\limits_{1\le i \le m}\nu_p(P_i(n)) < O(\log_p(n^d)) < O(\log_2(n))
    \label{eq 3.1}
\end{equation}
On the other hand, since $Q(x)$ is a polynomial with a degree not less than $d+1$, then $Q(x) = O(x^{q})$ where $q = \deg(Q(x))$. From Legendre's formula, we find that
\begin{equation}
\nu_p(Q(n)!) \ge \bigg\lfloor\dfrac{Q(n)}{p}\bigg\rfloor \ge \dfrac{O(n^{d+1})}{O(n^d)} = O(n).
\label{eq 3.2}
\end{equation} 
Hence, from (\ref{eq 3.1}) and (\ref{eq 3.2}) we see that there must exist a number $N$ as desired since $\lim\limits_{n \to \infty} \dfrac{n}{\log_2(n)} = \infty$. 
\end{proof}
Besides the lemma, we also denote $\mathcal{C}$ as the set of polynomials whose coefficients are co-prime. For a more concise presentation, we also consider the mapping 
$$
\begin{aligned}
\tau: &\quad \mathbb{Z}[x] \to \mathcal{C} \\
      &\quad P(x) \mapsto P_1(x)P_2(x) 
\end{aligned}
$$

if there exists a rational number $r$ such that $P(x) = rP_1(x)P_2(x).$  From here, if for $P(x)$ we can construct $\tau(P(x))$ satisfying Lemma \ref{lemma 3.1}, then of course, we deduce that $P(x)$ also satisfies it. 
\begin{lemma} \cite[Lemma 10]{schinzel}
Given a polynomial $f(x) \in \mathbb{Z}[x]$ with positive integer coefficients and degree $d > 2$. Then there exists a polynomial $h(x)$ with integer coefficients and degree $d-1$, with a positive leading coefficient, such that $f(h(x))$ has a factor $g(x) \in \mathbb{Z}[x]$ with degree $d.$ \label{lemma 3.2}
\end{lemma}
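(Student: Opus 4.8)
The plan is to translate the statement into the language of number fields and then make the translation effective. If $\alpha$ is a root of $f$ and I can produce an algebraic number $\beta$ of degree $d$ over $\mathbb{Q}$ together with a polynomial $h$ of degree $d-1$ satisfying $\alpha = h(\beta)$, then $f(h(\beta)) = f(\alpha) = 0$, so the minimal polynomial of $\beta$ — a polynomial of degree $d$ — divides $f(h(x))$, and that polynomial is exactly the factor $g$ I am after. Since $f$ has positive coefficients, $f(0) = a_0 > 0$, so no root of $f$ is zero and I may freely pass to reciprocals. The key idea is to take $\beta = 1/\alpha$: inverting $\beta$ modulo its own degree-$d$ relation is precisely what forces the inverse to be expressed by a polynomial of degree $d-1$, which is the degree the lemma demands.

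Concretely, writing $f(x) = \sum_{i=0}^d a_i x^i$, the number $\beta = 1/\alpha$ is a root of the reversed polynomial $f^*(x) = x^d f(1/x) = a_0 x^d + a_1 x^{d-1} + \cdots + a_{d-1}x + a_d$, which has integer coefficients and degree $d$. From $f^*(\beta) = 0$ I would solve for $\alpha = 1/\beta$ and obtain the identity
\[
\alpha = h(\beta), \qquad h(x) = -\tfrac{1}{a_d}\bigl(a_0 x^{d-1} + a_1 x^{d-2} + \cdots + a_{d-1}\bigr).
\]
Crucially this identity uses only the relation $f^*(\beta) = 0$, so it holds simultaneously at every root $\beta_i = 1/\alpha_i$ of $f^*$; hence $h$ sends each root of $f^*$ to a root of $f$, and $f(h(x))$ vanishes at all $d$ roots of $f^*$. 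Therefore $f^*(x) \mid f(h(x))$ in $\mathbb{Q}[x]$, and since $f^* \in \mathbb{Z}[x]$, Gauss's lemma lets me take $g$ to be the primitive part of $f^*$, an integer polynomial of degree $d$. The degree of $h$ is exactly $d-1$ because its leading coefficient $-a_0/a_d$ is nonzero, using $a_0 > 0$.

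It then remains to secure the two normalizations in the statement, namely that $h$ has integer coefficients and a positive leading coefficient. For monic $f$ (that is, $a_d = 1$) the displayed formula already yields $h \in \mathbb{Z}[x]$; for general $f$ I would first pass to the monic model $\tilde f(y) = a_d^{d-1} f(y/a_d) \in \mathbb{Z}[x]$, run the construction there, and clear denominators, transferring the factorization back to $f$. The positive leading coefficient I would try to enforce by replacing the generator $\beta$ by a sign-variant (for instance $-1/\alpha$) or by applying the substitution $x \mapsto -x$ simultaneously to $g$ and $h$, which preserves the divisibility $g \mid f \circ h$ while flipping the sign of the leading coefficient of $h$.

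I expect the genuine obstacle to be precisely this last bookkeeping: arranging integrality and a positive leading coefficient at once. The reciprocal construction pins the sign of the leading coefficient down to $\pm a_0/a_d$ with a parity that depends on $d$, so the $x \mapsto -x$ trick only fixes the sign for $d$ in one residue class modulo $2$; in the other case I would instead have to search among the other generators $\beta$ of $\mathbb{Q}(\alpha)$ for one that simultaneously produces a degree-$(d-1)$ expression for $\alpha$ with positive integer leading coefficient, and checking that such a generator always exists while remaining compatible with the monic reduction is the delicate part of the argument.
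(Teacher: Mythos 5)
Your core divisibility claim is correct, and can even be made cleaner and unconditional: since $x\,h(x) \equiv 1 \pmod{f^*(x)}$, one gets $x^d f(h(x)) \equiv \sum_i a_i x^{d-i} = f^*(x) \equiv 0 \pmod{f^*(x)}$, and $f^*(0)=a_d\neq 0$ then gives $f^*(x)\mid f(h(x))$ in $\mathbb{Q}[x]$. (This congruence argument also repairs your root-counting step, which as written needs $f$ separable, and your minimal-polynomial phrasing, which silently assumes $f$ irreducible -- neither is part of the hypothesis.) But what this proves is only the weak statement with $h\in\mathbb{Q}[x]$ and \emph{negative} leading coefficient; the two normalizations you defer to the end are exactly the content of the lemma, and neither of your proposed fixes closes. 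The monic reduction is circular: applying the construction to $\tilde f(y)=a_d^{d-1}f(y/a_d)$ does give $\tilde h\in\mathbb{Z}[x]$ with $\tilde f^*\mid \tilde f(\tilde h(x))$, but $\tilde f(\tilde h(x)) = a_d^{d-1} f\bigl(\tilde h(x)/a_d\bigr)$, so the polynomial that must be composed with the original $f$ is $\tilde h(x)/a_d$ -- carrying precisely the denominator you set out to remove. Nor can you ``clear'' it afterwards: replacing $h$ by $a_d h$ changes $f(h(x))$ nonlinearly, so the divisibility by a degree-$d$ polynomial is lost, while rescaling the variable ($x\mapsto a_d x$) makes every coefficient of $h$ integral \emph{except} the constant term, which stays $-a_{d-1}/a_d$.

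The sign problem is also more serious than your closing paragraph suggests. Your construction pins the leading coefficient of $h$ at $-a_0/a_d<0$, and the flip $x\mapsto -x$ (equivalently $\beta=-1/\alpha$) turns it into $(-1)^d a_0/a_d$, positive only for even $d$; you explicitly leave odd $d$ open. But odd $d$ includes $d=3$, and the only place this lemma is actually invoked in the paper is the cubic theorem (and, through it, Case~1 of the reducible quartic theorem), whose proof reads off explicit formulas from the proof of this very lemma -- so the case you cannot handle is the case the paper needs. It is instructive to compare with the construction the paper extracts there: Schinzel's $h$ is $Q(x)=Ax^2-Bx+2k$ with degree-$3$ factor $R(x)=Ax^3-Bx^2+(3k+2ab)x-1$, where $A$ and $B$ are explicit polynomials in a \emph{free integer parameter} $k$. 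Integrality is built in (note $R$ is the reversal of a monic integer polynomial: its constant term is $-1$, so the underlying generator is the reciprocal of an algebraic integer, not $1/\alpha$ itself), and positivity of the leading coefficients is bought by taking $k$ large. That extra degree of freedom, absent from your rigid choice $\beta=1/\alpha$, is what resolves both obstructions simultaneously; without it, your argument establishes the lemma only for monic $f$ of even degree.
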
 

\subsection{Quadratic polynomial} 

\begin{proposition}
Let $P(x) \in \mathbb{Z}[x]$ be a quadratic polynomial. Then, $P(x) \in \mathcal{P}.$ 
\end{proposition}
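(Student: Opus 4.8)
The plan is to start from the reduction recorded just before Lemma~\ref{lemma 3.1}: after replacing $P(x)$ by $P(x+y)$ for a large fixed integer $y$ we may assume $P(x)=ax^2+bx+c$ has positive integer coefficients, and it then suffices to produce infinitely many $n$ with $P(n+y)\mid n!$, since $n!\mid(n+y)!$ turns this into $P(m)\mid m!$ for $m=n+y$. Throughout I would keep in mind the valuation criterion that makes everything work: because $P(n)=O(n^2)$, every prime $p$ satisfies $\nu_p(P(n))=O(\log n)$, whereas Legendre's formula gives $\nu_p(n!)\ge\lfloor n/p\rfloor$, which is large for all $p\le\sqrt n$. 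Hence if $P(n)$ is $n$-smooth (no prime factor exceeding $n$), the inequality $\nu_p(P(n))\le\nu_p(n!)$ can only be in doubt for primes $p\in(n/2,n]$ dividing $P(n)$ to a power $\ge2$; for such $p$ one has $p^2>n^2/4$ while $p^2\mid P(n)=O(n^2)$, so the cofactor is $O(1)$, a coincidence occurring for at most finitely many $n$ along any sensible family. So the whole problem reduces to exhibiting infinitely many $n$ for which $P(n)$ is $n$-smooth, and I would split into the reducible and irreducible cases.

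In the \emph{reducible} case, Gauss's lemma gives $\tau(P)=L_1(x)L_2(x)$ with $L_1,L_2$ primitive linear, so by the remark following the definition of $\tau$ it suffices to treat $L_1L_2$. Each value $L_i(n)=O(n)$, so the only way $L_i(n)$ can fail to be $n$-smooth is to be divisible by a prime in the short range $(n,O(n)]$; I would choose $n$ in a suitable arithmetic progression so that $\gcd(L_1(n),L_2(n))$ stays bounded and neither $L_i(n)$ is divisible by a prime exceeding $n$ (a density-$1$ condition, since a value of size $O(n)$ has a prime factor above $n$ only when a thin, explicitly controllable event occurs). For such $n$ the two factors are $n$-smooth and essentially coprime, and the valuation count above upgrades this to $L_1(n)L_2(n)\mid n!$.

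The \emph{irreducible} case is where the real difficulty lies. Completing the square gives $4aP(n)=(2an+b)^2-\Delta$ with $\Delta=b^2-4ac$ not a perfect square, so no polynomial identity splits $P(n)$ into two integer factors of size $O(n)$; indeed $P(n)$ can itself be prime, and already $n^2+1$ has the property that every two-prime factorization contains a prime $>n$. Thus elementary factoring cannot succeed and I must genuinely show that $P(n)$ is $n$-smooth for infinitely many $n$ — exactly the statement proved for quadratics in \cite{bober} via the techniques of \cite{schinzel}. My route would be to work in the quadratic order $\mathbb{Z}[\theta]$ with $\theta$ a root of $P$: a prime $p$ divides some value $P(n)$ precisely when $\Delta$ is a quadratic residue mod $p$, i.e.\ when $p$ splits, and by Dirichlet the split primes have relative density $\tfrac12$. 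I would then build $n$ so that $P(n)$ is a norm supported entirely on split primes below $n$, selecting those primes from a range whose cumulative contribution is controlled by Mertens' theorem and Corollary~\ref{corollary 2.4}, which guarantee that enough split primes are available to realise a value of the correct size while keeping every prime factor under $n$.

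The step I expect to dominate the work is precisely this construction of infinitely many $n$ with $P^+(P(n))\le n$: forcing divisibility of $P(n)$ by a prescribed product of small split primes by the Chinese Remainder Theorem is routine, but \emph{excluding} a single leftover large prime in the cofactor is not a congruence phenomenon and requires a genuine smooth-values (sieve or norm-form) input. Once such $n$ are secured, the passage from $n$-smoothness to $P(n)\mid n!$ is the routine valuation bookkeeping described in the first paragraph, giving $P(n+y)\mid n!$ for infinitely many $n$ and hence $P\in\mathcal{P}$.
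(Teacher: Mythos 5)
Your proposal has a genuine gap at exactly the point you yourself flag as dominant: in the irreducible case you never construct the smooth values. You reduce the proposition to ``there are infinitely many $n$ with $P(n)$ $n$-smooth,'' correctly observe that congruence conditions alone cannot exclude one leftover large prime in the cofactor, and then defer to ``a genuine smooth-values (sieve or norm-form) input'' --- that is, to the theorem of \cite{bober} which is essentially the statement you were asked to prove. Nothing in your outline (split primes, Dirichlet, Mertens, Corollary~\ref{corollary 2.4}) closes that gap; it is a reduction to a deep external result, not a proof. The reducible case as sketched is also incomplete: the ``density-$1$'' claim and the dismissal of primes $p\in(n/2,n]$ with $p^2\mid P(n)$ as ``a coincidence occurring for at most finitely many $n$'' are asserted rather than proved, though these are repairable.

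The deeper problem is that your guiding claim --- ``no polynomial identity splits $P(n)$ into two integer factors of size $O(n)$, \ldots\ elementary factoring cannot succeed'' --- is precisely the misconception the paper's proof sidesteps. The paper never factors $P(n)$ for generic $n$; it evaluates at the special points $n=P(m)+m$. Since $P(x+P(x))\equiv P(x)\equiv 0 \pmod{P(x)}$ as polynomials, one has $P(P(m)+m)=P(m)Q(m)$ with $Q$ quadratic, regardless of whether $P$ is irreducible. By Schur's theorem \cite{schur} one fixes a large prime $q$ dividing some value $Q(l)$, chosen so large that $Q(x)/q$ has leading coefficient smaller than that of $P(x)$; then for all large $m\equiv l\pmod q$ the integers $q$, $Q(m)/q$, $P(m)$ are distinct and all smaller than $n=P(m)+m$, so their product $P(n)$ divides $n!$ trivially (distinct positive integers at most $n$ occur as distinct factors of $n!$). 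No smoothness, sieve, or algebraic-number-theoretic input is needed, and irreducibility of $P$ is irrelevant. So even granting the input of \cite{bober} as a black box, your route is far heavier than the statement requires, and as written it does not constitute a proof.
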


\begin{proof}
Let $n = P(m)+m$ with $m$ is a positive integer that we choose later. 
Since $$P(P(x)+x) \equiv P(x) \equiv 0 \pmod {P(x)}$$ and $P(P(x)+x)$, $P(x)$ are polynomials with integer coefficients of degrees four and two, respectively, we can see that there must exist a quadratic polynomial $Q(x)$ with integer coefficients and a positive leading coefficient that satisfies $$P(P(x)+x)=P(x)Q(x).$$ 
Now, applying Schur's theorem, we can take a prime $q$ as a sufficiently large prime divisor of $Q(x)$ such that the leading coefficient of $\dfrac{Q(x)}{q}$ is smaller than the leading coefficient of $P(x)$. 
At this point, consider $l$ as a positive integer such that $q \mid Q(l)$. 
Since $Q(x)$ is a polynomial with integer coefficients, we always have that $$Q(m) \equiv Q(l) \equiv 0 \pmod q,$$
for all positive integers $m$ such that $m \equiv l \pmod q$. Thus, combining this with the fact that
$P(x) \text{ and } \dfrac{Q(x)}{q}$ are both quadratic polynomials and the leading coefficient of $\dfrac{Q(x)}{q}$  is smaller than the leading coefficient of $P(x)$, so for any sufficiently large $m$ satisfying $m \equiv l \pmod q$, we must have that $q \mid Q(m)$ and 
$$1<q<\dfrac{Q(m)}{q}<P(m)<P(m)+m.$$
In summary, for infinitely many sufficiently large $m$; $q,\dfrac{Q(m)}{q}$ and $P(m)$ will be distinct positive integers that are all less than $P(P(m)+m)$ and whose product is equal to $P(P(m)+m)$. This implies that $P(P(m)+m) \mid (P(m)+m)!.$ 
\end{proof} 

\subsection{Cubic polynomial}

\begin{theorem}
Let $P(x) \in \mathbb{Z}[x]$ be a cubic polynomial. Then, $P(x) \in \mathcal{P}.$ 
\end{theorem}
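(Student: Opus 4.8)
The plan is to mirror the quadratic argument but to use Lemma \ref{lemma 3.2} in place of the self-composition identity, since for an irreducible cubic the polynomial $P(P(x)+x)/P(x)$ need not factor over $\mathbb{Z}$. After replacing $P$ by $-P$ if necessary I may assume $P$ has positive leading coefficient, and by the substitution $x \mapsto x+y$ discussed above I may further assume all coefficients are positive. Applying Lemma \ref{lemma 3.2} to the cubic $P$ (here $d=3>2$) produces a quadratic polynomial $h(x) \in \mathbb{Z}[x]$ with positive leading coefficient and a factorization $P(h(x)) = g(x)\,r(x)$ in $\mathbb{Z}[x]$, where $g$ has degree $3$ and the cofactor $r$ therefore also has degree $6-3 = 3$. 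I would normalize so that both $g$ and $r$ have positive leading coefficients. Setting $n = h(m)$, the identity becomes $P(n) = g(m)\,r(m)$, so the whole problem reduces to proving that $g(m)\,r(m) \mid h(m)!$ for infinitely many positive integers $m$.

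The decisive difference from the quadratic case appears here: since $\deg h = 2$ while $\deg g = \deg r = 3$, for large $m$ each factor satisfies $g(m),\,r(m) \asymp m^{3}$, which is of order $n^{3/2}$ and hence strictly larger than $n = h(m) \asymp m^{2}$. Thus neither cofactor can be made smaller than $n$ by peeling off a single bounded prime, and the clean ``three distinct factors below $n$'' conclusion of the quadratic proof is unavailable. The plan is instead to show that for infinitely many $m$ one can write $P(n) = g(m)\,r(m)$ as a product of \emph{distinct} positive integers each at most $n$; by Legendre's formula such a product divides $n!$. Concretely, I would first use Schur's theorem to fix primes dividing $g$ and $r$ along prescribed residue classes of $m$ (exactly as $q \mid Q(m)$ was arranged in the quadratic proof), splitting off factors and lowering the leading behaviour of the remaining parts; then I would group the prime factors of each cofactor into distinct integers in the range $(1,n]$. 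Controlling multiplicities through Legendre's formula and counting the available small factors through the Mertens-type divergence in Corollary \ref{corollary 2.4} should furnish enough room to realize such a grouping.

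The main obstacle is precisely this last decomposition: one must guarantee, for infinitely many $m$ simultaneously, that the two cubic values $g(m)$ and $r(m)$ --- each of size about $n^{3/2}$ --- split into distinct integers not exceeding $n$, which amounts to controlling their largest prime factors and divisor structure. This is the genuinely arithmetic heart of the cubic case and has no analogue in the quadratic proof, where the cofactor had the same degree as $n$. I expect the argument to hinge on choosing $m$ in a carefully constructed arithmetic progression (via Schur) so that several small primes are forced to divide $g(m)\,r(m)$, after which Corollary \ref{corollary 2.4} supplies unboundedly many such small prime factors and Legendre's formula certifies that the resulting prime powers all occur in $n!$. Verifying that these forced factorizations can be arranged into genuinely distinct integers below $n$, rather than merely bounding the largest prime factor, is the step I would expect to require the most care.
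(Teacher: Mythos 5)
Your opening move---a single application of Lemma~\ref{lemma 3.2} to get $P(h(x)) = g(x)r(x)$ with $\deg h = 2$, $\deg g = \deg r = 3$, and $n = h(m)$---coincides with the first step of the paper's proof, and you correctly diagnose the resulting obstruction: each cofactor has size $\asymp n^{3/2} > n$. But the plan you offer to overcome it is not a proof and cannot be repaired in the form you describe, because it is circular. ``Grouping the prime factors of $g(m)$ into distinct integers in $(1,n]$'' presupposes that every prime factor of $g(m)$ is at most $n \asymp m^2$, i.e.\ that $P^+(g(m)) < m^2$ for infinitely many $m$---which is precisely the kind of smoothness statement the theorem is trying to establish, now for the cubic $g$ instead of $P$. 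Forcing finitely many fixed primes to divide $g(m)$ by placing $m$ in an arithmetic progression (Schur) only divides the cofactor by a bounded constant, leaving an integer still of size $\gg n^{3/2}$ about whose factorization you know nothing; it could perfectly well be prime. Corollary~\ref{corollary 2.4} cannot rescue this either: in the paper it is used only for cyclotomic and Chebyshev polynomials, where the polynomial itself splits into many factors of small degree; it produces no small prime divisors of values of an irreducible cubic. (A side remark: $P(P(x)+x)/P(x)$ does always lie in $\mathbb{Z}[x]$, since $P(a)-P(b)$ is divisible by $a-b$; the reason the quadratic argument fails for cubics is not integrality but exactly the degree imbalance you identify later.)

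The idea you are missing is to apply Lemma~\ref{lemma 3.2} a \emph{second} time, to each cubic factor separately, and to synchronize the two substitutions with a Pell equation. The paper writes $\tau(P(Q(x))) = R(x)S(x)$ with $Q$ quadratic and $R,S$ cubic, then chooses quadratics $g(x) = Ax^2 - Bx + 2l$ and $h(x) = Cx^2 - Dx + 2l$ (built from the coefficients of $R$ and $S$ respectively) so that $\tau(R(g(x))) = R_1(x)R_2(x)$ and $\tau(S(h(x))) = S_1(x)S_2(x)$ with all four factors cubic. The crux is to produce infinitely many integer pairs $(u,v)$ with $g(u) = h(v)$: after using Lemma~\ref{lemma 3.5} to choose $l$ so that $AC$ is not a perfect square, the Pell equation $r^2 - ACs^2 = 1$ has infinitely many solutions, and $u = -BCs^2 - Drs$, $v = -Brs - ADs^2$ solves $Au^2 - Bu = Cv^2 - Dv$. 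Setting $t = g(u) = h(v)$ and $n = Q(t)$ then gives $\tau(P(n)) = R_1(u)R_2(u)S_1(v)S_2(v)$, where $n$ is quartic in $u$ while all four factors are cubic in $u$ (or $v$, with $|v| \asymp |u|$), so each factor is $\asymp n^{3/4}$; Lemma~\ref{lemma 3.1} (with $d = 3$ and $\deg Q = 4$) then yields $P(n) \mid n!$, and in fact the stronger conclusion $P^+(P(n)) < n^{3/4+\varepsilon}$. Without this second factorization step and the Pell synchronization, the size obstruction you flagged is fatal to the argument.
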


\begin{proof} 
Let $P(x) = ax^3+bx^2+cx+d$, where $a,b,c \text{ and }d$ are positive integers. Analyzing the proof of Lemma \ref{lemma 3.2}, consider
\begin{align*}
Q(x) =& (16a^4k^3+8a^3bk^2+16a^3ck-4a^2b^2k-16a^3d+8a^2bc-2ab^3)x^2\\ 
& -(12a^2k^2+4abk+4ac-b^2)x + 2k 
\end{align*}
then there exist two polynomials
\begin{align*}
R(x) = & (16a^4k^3+8a^3bk^2+16a^3ck-4a^2b^2k-16a^3d+8a^2bc-2ab^3)x^3 \\ 
& -(12a^2k^2+4abk+4ac-b^2)x^2 +(3k+2ab)x-1 
\end{align*}  and $S(x)$$ \in \mathbb{Z}[x]$ such that $\tau(P(Q(x)))= R(x)S(x).$ 
\newline 
Set $R(x) = a_rx^3+b_rx^2+c_rx+d_r$ where 
\begin{align*}
&a_r = 16a^4k^3+8a^3bk^2+16a^3ck-4a^2b^2k-16a^3d+8a^2bc-2ab^3 \\ 
&b_r = -(12a^2k^2+4abk+4ac-b^2) \\
&c_r = 3k+2ab \\
&d_r = -1
\end{align*}
and write $S(x) = a_sx^3+b_sx^2+c_sx+d_s.$ Thus, we choose again $$g(x) =Ax^2-Bx+2l$$ with
\begin{align*}
&A =A(l)= 16a_r^4l^3+8a_r^3b_rl^2+16a_r^3c_rl-4a_r^2b_r^2l-16a_r^3d_r+8a_r^2b_rc_r-2a_rb_r^3 \\ & B = B(l) = 12a_r^2l^2+4a_rb_rl+4a_rc_r-b_r^2 
\end{align*}
We also consider the derivative of $A(l)$ given by $$A(l)' = 48a_r^4l^2+16a_r^3b_rl+4a_r^2(4a_rc_r-b_r^2)$$
where the discriminant $\Delta'$ of $A(l)'$ is
$$\Delta '_{A'(l)} = 64a_r^6b_r^2 - 192a_r^6(4a_rc_r-b_r^2) = 64a_r^6(4b_r^2 -12a_rc_r) = 256(b_r^2-3a_rc_r)$$ 
We observe that $b_r^2-3a_rc_c$ is a polynomial in $k$ with the leading coefficient $24a^3b(1-4a^2) < 0$, which is negative. Thus, for sufficiently large $k$, we must have $b_r^2-3a_rc_r < 0$. Consequently, $A(l)' > 0 $ for all $l$ which implies that $A(l)$ has at most one root.
\newline 
On the other hand, we also consider $$h(x) = Cx^2 - Dx+2l$$ with 
\begin{align*} 
& C = C(l) = 16a_s^4l^3+8a_s^3b_sl^2+16a_s^3c_sl-4a_s^2b_s^2l-16a_s^3d_s+8a_s^2b_sc_s-2a_sb_s^3 \\ 
& D = D(l) = 12a_s^2l^2+4a_sb_sl+4a_sc_s-b_s^2 
\end{align*} 
In the next step, we will point out the existence of a positive integer $l$ such that $AC$ is not a perfect square, based on the following lemma.
\begin{lemma} \label{lemma 3.5}
(see \cite{murty}). Let $P(x)$ be a nonconstant polynomial and $P(n)$ is perfect for all sufficiently large $n$. Then there exists a polynomial $Q(x)\in \mathbb{Z}[x]$ such that $P(x) = Q(x)^2.$  
\end{lemma} 
Assume for contradiction that $AC$ is a perfect square for all sufficiently large $l$ or $A(l)C(l)$ is a perfect square for all sufficiently large $l$. By Lemma \ref{lemma 3.5} $$A(l)C(l) = E(l)^2.$$ Since $A(l)$ và $C(l)$ are cubic polynomials, it follows that there must exist linear polynomials with rational coefficients such that $$A(l) = A_1(l)A_2(l)^2.$$ This leads to a contradiction with the choice of $l$. In other words, we see that there must exist an $l$ such that $AC$ is not a perfect square. 
From the choice of $h(x)$ and $g(x)$, by Lemma \ref{lemma 3.2} it follows that there must exist polynomials $R_1(x),R_2(x),S_1(x),S_2(x)$ such that
$$\tau(R(g(x))) = R_1(x)R_2(x)$$ and $$ \tau(S(h(x))) =S_1(x)S_2(x)$$
where $R_1(x),R_2(x),S_1(x),S_2(x)$ are cubic polynomials with positive integer leading coefficients.
\newline 
Next, we will prove that the equation $g(u) = h(v)$ has infinitely many integer solutions $(u,v).$
Indeed, this is equivalent to
\begin{equation}
Au^2-Bu-Cv^2+Dv = 0 
\label{eq 3.3}
\end{equation}  
We know that from $AC$ is not a perfect square, Pell's equation: $$r^2-ACs^2 = 1$$ has infinitely many positive integer solutions $(r,s).$ Therefore, for each pair $(r,s)$, we just need to choose $$u = -BCs^2 - Drs \text{ and } v = -Brs - ADs^2 $$
then $(u,v)$ will be a solution to (\ref{eq 3.3}). From this, there are infinitely many pairs of sufficiently small integers $(u,v)$ such that $g(u) = h(v) > 0$ since $A, B, C, D >0$.
\newline 
Finally, by setting $t = g(u) = h(v),$ we obtain
\begin{align*}
\tau(P(n)) = \tau(P(Q(t))) &= \tau(R(t)S(t)) \\ &= \tau(R(g(u)))\tau( S(h(v))) \\ &= R_1(u)R_2(u)S_1(v)S_2(v)
\end{align*} 
Since $R_1(x),R_2(x),S_1(x),S_2(x)$ are cubic polynomials while $Q(g(x))$ is a quartic polynomial, so for each pair $(u,v)$ when $u \to -\infty$ and $v \to -\infty$, we have $$R_i(u) = O(u^3), S_i(v) = O(v^3)\quad (i = \overline{1,2})$$ and $$Q(t) = O(u^4) = O(v^4)$$  
Therefore, from Lemma \ref{lemma 3.1}, we complete the proof of the desired result.
\end{proof} 
From this, we also see that for each $\varepsilon > 0$, there exist infinitely many $n$ such that $$P^+(P(n)) < n^{\frac{3}{4}+\varepsilon}$$ which is a significant improvement over the result of A. Schinzel \cite{schinzel}($P^+(P(n)) <n^{1.14}$). Moreover, based on Lemma \ref{lemma 3.1}, we also see that $P(x)^m$ also belongs to $\mathcal{P}$ where $m$ is a positive integer.

\subsection{Reducible quartic polynomial} 

\begin{theorem}
Let $P(x) \in \mathbb{Z}[x]$ be a reducible quartic polynomial. Then, $P(x) \in \mathcal{P}.$ 
\end{theorem}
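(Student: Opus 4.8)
The plan is to use the hypothesis that $P$ is reducible to write $P=P_1P_2$ with $\deg P_1+\deg P_2=4$, and then to reduce to two essential cases by grouping the factors: (i) $P=L\cdot C$ with $L$ linear and $C$ cubic, and (ii) $P=A\cdot B$ with $A,B$ irreducible quadratics. Every reducible quartic falls into one of these: if $P$ has a linear factor I group it as linear times cubic (the cubic may itself be reducible, which is harmless since the preceding theorem covers \emph{all} cubics), and otherwise $P$ is a product of two irreducible quadratics. After a common shift $x\mapsto x+y$ with $y$ large I may assume, as in the paper's standing convention, that both factors have positive integer coefficients. In both cases I follow the guiding principle already used: for infinitely many $n$ I exhibit a factorization of $P(n)$ witnessing $P(n)\mid n!$, either by displaying $P(n)$ as a product of distinct integers in $[1,n]$, or by writing $P(n)$ as a product of bounded-degree polynomials in an auxiliary parameter and invoking Lemma \ref{lemma 3.1}.

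For case (ii) I would imitate and extend the quadratic Proposition. From $A(A(x)+x)\equiv A(x)\equiv 0 \pmod{A(x)}$ one gets $A(A(x)+x)=A(x)Q_A(x)$ with $Q_A$ quadratic, and similarly $B(B(x)+x)=B(x)Q_B(x)$. The new ingredient is to \emph{synchronize} the two shifts, i.e.\ to solve $A(u)+u=B(v)+v=:n$ for infinitely many integers $(u,v)$. This is a conic whose leading part is $a_2u^2-b_2v^2$; to force infinitely many solutions I would pass to the family of shifts $\phi(u)=u+cA(u)$, $\psi(v)=v+c'B(v)$ and choose $c,c'$ so that $cc'a_2b_2$ is not a perfect square, reducing to a solvable Pell-type equation exactly as in the cubic proof. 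For such $n$ one has $A(n)=A(u)Q_A(u)$ and $B(n)=B(v)Q_B(v)$, with $A(u)=n-u<n$ and $B(v)=n-v<n$, while $Q_A(u)\approx a_2n$ and $Q_B(v)\approx b_2n$ are slightly too large; as in the Proposition I split off fixed large prime divisors $q_A\mid Q_A$, $q_B\mid Q_B$ (Schur's theorem), chosen larger than the respective leading coefficients, so that $Q_A(u)/q_A,\,Q_B(v)/q_B<n$. Routing the Pell solutions through the residue classes $u\equiv u_0\pmod{q_A}$ and $v\equiv v_0\pmod{q_B}$, using the periodicity of the solution set, I obtain six distinct integers $q_A,\ Q_A(u)/q_A,\ n-u,\ q_B,\ Q_B(v)/q_B,\ n-v$, all lying in $[1,n]$, whose product is $A(n)B(n)=P(n)$; hence $P(n)\mid n!$.

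For case (i) I would run the construction of the preceding cubic theorem on $C$, obtaining $n=Q(g(u))=Q(h(v))$ with $\tau(C(n))=R_1(u)R_2(u)S_1(v)S_2(v)$ a product of four cubics and $n$ of degree $4$ in $u$; the cubic factor is then handled exactly as there. The only remaining point is the linear factor: after substitution $L(n)=an+b$ is a degree-$4$ polynomial in $u$, whose prime factors I must fit into $n!$. The clean way is to arrange that $L(n)$ \emph{splits}. Writing $L(n)=aQ(t)+b$ with $t=g(u)$, I would use the free parameter $k$ defining $Q$ to make the quadratic $aQ(t)+b$ acquire rational roots (its discriminant in $t$ a perfect square, a condition solved by a Pell/square argument of the type in the cubic proof and Lemma \ref{lemma 3.5}); then $L(n)=\mathrm{const}\cdot(g(u)-t_1)(g(u)-t_2)$ is a product of two quadratics in $u$. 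Adjoining these two degree-$2$ factors to the four cubics $R_i,S_i$, every factor of $P(n)$ has degree $\le 3$ in $u$ while $n$ has degree $4$, so Lemma \ref{lemma 3.1} (in the mixed form already used at the end of the cubic proof, treating $v$ as a function of $u$) yields $P(n)\mid n!$.

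The hardest part, where I expect to spend the most effort, is controlling the linear factor in case (i): showing that the parameters of the cubic construction can be tuned so that $L(n)$ factors into pieces of degree at most $3$ in $u$, equivalently so that $aQ(t)+b$ acquires a rational root. This is a genuine Diophantine constraint—``a certain discriminant is a perfect square for some admissible $k$''—and making it simultaneously compatible with the other demands of the construction (positivity of leading coefficients, solvability of the inner Pell equation, and the non-square condition on $AC$) is the real obstacle. A secondary but nontrivial difficulty is the bookkeeping in case (ii): guaranteeing that the synchronizing conic is solvable and that its solution set meets the prescribed residue classes modulo $q_A$ and $q_B$ while keeping the six exhibited factors pairwise distinct.
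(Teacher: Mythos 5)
Your case decomposition (linear times cubic; two irreducible quadratics) and much of your machinery---shift maps $x \mapsto x+cf(x)$, synchronization of the two shifts through a conic solved by Pell solutions after forcing the product of leading coefficients to be a non-square---match the paper's proof. But the two mechanisms you invent to shrink the oversized factors are precisely where the proposal breaks down, and in both places the paper does something different. In case (i), you propose to tune the parameter $k$ so that $eQ(t)+f$ acquires rational roots, i.e.\ so that its discriminant in $t$ is a perfect square. Since the coefficients of $Q$ are themselves polynomials in $k$ of degrees $3$ and $2$, that discriminant, $e^2B_k^2-4eA_k(2ek+f)$, is a genuine quartic polynomial $D(k)$, and you need $D(k)=y^2$ to hold for some admissible (in particular sufficiently large) $k$. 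That is an integral point on a curve which is generically of genus one: by Siegel's theorem there are at most finitely many such $k$, possibly none in the admissible range, and neither a Pell argument nor Lemma \ref{lemma 3.5} (which the paper uses in the \emph{opposite} direction, to show a product is \emph{not} a square) can produce one. The paper never splits the linear factor as a polynomial. Instead it exploits $s_0=0$: the Pell sequence $(s_j)$ is periodic modulo any modulus, so $s\equiv 0\pmod p$ holds along a subsequence, hence $p\mid u$ and $eQ(g(u))+f\equiv eQ(g(0))+f\equiv 0\pmod p$ for the fixed integer $p=eQ(2k)+f$; the linear factor then contributes the fixed divisor $p$ times an integer smaller than $n$, and no splitting is needed.

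In case (ii) you hit the same wall: you want Pell-generated $u$ with $u\equiv u_0\pmod{q_A}$, where $u_0$ is a root of $Q_A$ modulo $q_A$ supplied by Schur's theorem. Periodicity of the orbit modulo $q_A$ only guarantees that residues occurring once occur infinitely often; it gives no way to hit a \emph{prescribed} residue class. (In the quadratic Proposition this step was legitimate because $m$ there ranged over a full arithmetic progression; here $u$ is confined to the Pell orbit, which is a sparse, rigid set.) The paper's Case 2 is engineered to avoid exactly this: the only congruences it ever imposes have the form $s\equiv 0\pmod c$, which the orbit satisfies along a subsequence precisely because $s_0=0$, and a preliminary substitution (making one constant term equal to $1$ and the other maximal among its coefficients) arranges that the integers to be divided out of the oversized factors $Q_2,R_2$ are their constant terms $c_q,c_r$, which then automatically divide $k$ and $u$ since both are multiples of $s$. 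Your case (i) could likely be repaired by importing the paper's fixed-divisor trick, but as written the two steps you yourself flag as ``the real obstacle'' are not merely hard: the first is generically impossible, and the second requires a mechanism you do not have, so the proposal does not constitute a proof.
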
 

\begin{proof} 
From the hypothesis, we will consider two cases.

\textbf{Case 1:} $P(x) = (ax^3 + bx^2 + cx + d)(ex+f).$

Similar to Theorem \ref{lemma 3.5}, let $n = Q(m)$ so that  $$\tau(P(Q(m))) = (eQ(m)+f)R(m)S(m)$$ along with $m = g(u) = h(v)$, where $$g(x) = Ax^2-Bx+2k$$ \and $$h(x) = Cx^2-Dx+2k.$$ Thus, we will need to prove.
\begin{align}
(eQ(g(u))+f)R_1(u)R_2(u)S_1(v)S_2(v) \mid Q(g(u))! 
\label{eq 4}
\end{align} 
We denote $p = eP(2k)+f > e$ and consider the equation $g(u) = h(v)$ which means
\begin{equation}
Au^2 - Bu -Cv^2 + Dv = 0. 
\label{eq 3.5}
\end{equation} 
We know that the Pell equation $r^2-ACs^2 = 1$ has infinitely many solutions $(r,s)$ given by 
$$
\begin{cases}
    r_0=1,r_{n+2} = 2r_1r_{n+1}-r_n. \\ 
    s_0 = 0, s_{n+2} = 2r_1s_{n+1}-s_n.
\end{cases}
$$
For each such pair $(r,s)$, we will choose $$(u,v) = (-BCs^2-Drs, -Brs-ADs^2)$$ as a solution pair of (\ref{eq 3.5}) Since $s_0 = 0$, we will show that there exists a subsequence $(s_{n_j})_{j \in \mathbb{N}}$ such that $p $ is a divisor of $s_{n_j}.$ Indeed, consider $p^2+1$ pairs $(s_i, s_{i+1}).$ By the pigeonhole principle, there must exist indices $i_1 < i_2$ such that $$ s_{i_1+2} \equiv s_{i_2+2} \pmod{p} \quad \text{and} \quad s_{i_1+1} \equiv s_{i_2+1} \pmod{p}.$$
From this, based on the recurrence formula of the sequence $(s_n),$ we deduce that $s_{i_1} \equiv s_{i_2} \pmod{p}$ and by induction, we conclude that $s_i \equiv s_{i+j} \pmod{p}$ where $j = i_2-i_1$ for all $i \in \mathbb{N}.$
Thus, from $s_0 = 0,$ we see that there must exist a subsequence $(s_{n_j})$ as desired. 
Therefore, $p$ will be a divisor of $u$ and we have $$eQ(g(u))+f \equiv eQ(g(0)) + f \equiv eP(2k)+f \equiv 0 \pmod{p}$$
From this, $\dfrac{eQ(g(u))+f}{p} < Q(g(u))$ for sufficient large $u$, so applying Lemma \ref{lemma 3.1} we will obtain (\ref{eq 4}).
\newline 

\textbf{Case 2:} $P(x) = (a_1x^2+b_1x+c_1)(a_2x^2+b_2x + c_2) = P_1(x)P_2(x)$. 

Without loss of generality, we can assume that $c_1 = 1$ and $c_2 = max\{a_2,b_2,c_2 \}$ because we can replace $n$ by $(n+m)c_1$ with $m$ large enough so that $P_1((n+m)c_1)=c_1(ac_1(n+m)^2 +b(n+m) + 1) = c_1P_3(n)$ and the constant term of $P_4(n) = P_2((n+m)c)$ is the largest. Therefore, we will consider the problem with 
\begin{align*}
P(x) &= (ax^2+bx+1)(dx^2+ex+f) \\
&= R(x)Q(x)
\end{align*}
where $f = max\{d,e,f\}.$ 
We consider the equation $$k+f+lfQ(k+f) = u+vR(u)$$ where $l,v$ are constants chosen later, and $k,u$ are variables. Then, the equation is equivalent to 
\begin{equation}
alfk^2+(2alf+lfb+1)k+lfQ(f)+f = dvu^2+(ev+1)u+vf
\label{eq 3.6}
\end{equation}
We choose a fixed prime number $p$ that is greater than $\max\{adf,Q(f)\}$ and select $l,v$ such that $lQ(f)+1=v$ with $\nu_p(v) = 1.$ 
We can choose $l$ and $v$ satisfying the stated property because if $(l,v)$ is a solution pair, then $(l+t,v+tQ(f))$ is also a solution pair. 
We will choose $t$ such that $t \equiv \dfrac{p-v}{Q(f)} \pmod{p^2}$ to ensure that $\nu_p(v+tQ(f)) = 1.$ From this choice, we deduce that $alfdv$ will be divisible by $p$ not divisible by $p^2$, implying that it cannot be a perfect square.

Let $A = alf, B = 2alf+lfb+1, C = dv \text{ and } D = ev+1$. Then from Theorem \ref{lemma 3.5}, (\ref{eq 3.6}) will have infinitely many solution pairs $$(k,u) = (BCs^2+Drs,Brs+ADs^2)$$ where $(r,s)$ is a solution pair of the Pell equation $r^2-ACs^2 = 1.$ 
This equation has a sequence of solutions given by the formula 
$$
\begin{cases}
    r_0=1, r_{n+2} = 2r_1r_{n+1}-r_n. \\
    s_0 = 0, s_{n+2} = 2r_1s_{n+1} - s_n.
\end{cases}
$$
Additionally, let $n = k+f+lfQ(k+f) = u+vR(u)$ and using the property that $$f(x+f(x)) \equiv 0 \pmod{f(x)}$$ to have $$Q(n)R(n) = Q_1(k)R_1(u)Q_2(k)R_2(u) $$ where $Q_1(x) = Q(x+f), Q_2(x) = alfx^2+(2alf^2+lfb+1)x+lf(1+bl+af^2)+f \text{ and } R_1(x) = R(x), R_2(x) = x+vR(x)$.
Clearly, we see that the constant terms of $Q_2(x), R_2(x)$ are always greater than their leading coefficients, denoted as $c_q \text{ and } c_r$ respectively. Also, since $s_0 = 0$ we can select a subsequence $(s_{n_j})_{j \in \mathbb{N}}$ such that $c_qc_r$ is a divisor of $s_{n_j}.$ 
From this, since $s$ is always a divisor of $k$ and $u$, we deduce that $c_q$ is a divisor of  $k$ and $c_r$ is a divisor of  $u.$ Thus $$Q_1(k)R_1(u)Q_2(k)R_2(u) = c_qc_rQ_1(k)R_1(u)\dfrac{Q_2(k)}{c_q}\dfrac{R_2(u)}{c_r}$$ with $k,u > N$ for some fixed $N$,we must have $c_qc_r,Q_1(k),R_1(u),\dfrac{Q_2(k)}{c_q},\dfrac{R_2(u)}{c_r}$ as distinct numbers all smaller than $k+f+lfQ(k+f) = u+vR(u).$ Hence $P(n) \mid n!$ for infinitely many $n$ as desired. 
\end{proof}
From the above discussion, we can see that for infinitely many $n$, $P^+{(f(n)) < n}$ for a reducible quartic polynomial $f(x)$. This improves upon Schinzel's result, which stated that $P^+(f(n)) < n^{2.237}$ for infinitely many $n.$  

\subsection{Cyclotomic polynomial}
The question of whether the polynomial $f(x) \in \mathbb{Z}[x]$ is a product of binomials was proven by the authors in \cite{balog}. We will provide a simpler proof; from there, for $f(x),$ we can approach it similarly by using Corollary \ref{corollary 2.4} and Lemma \ref{lemma 3.1}
\begin{proposition}
$P(x) = x^m -1 \in \mathcal{P}$ where $m$ is a positive integer. 
\end{proposition}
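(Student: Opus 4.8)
The plan is to begin from the cyclotomic factorisation $x^{m}-1=\prod_{d\mid m}\Phi_{d}(x)$ recorded in the preliminaries, so that $P(n)=n^{m}-1=\prod_{d\mid m}\Phi_{d}(n)$ breaks into a bounded number of monic integer factors, one for each divisor $d\mid m$, of degree $\varphi(d)\le\varphi(m)<m$. The genuine obstruction to invoking Lemma \ref{lemma 3.1} directly is the top factor $\Phi_{m}(n)$, which is of order $n^{\varphi(m)}$ and hence far larger than $n$; forcing every prime factor of $n^{m}-1$ to stay below $n$, as one would need if $n$ ranged over all integers, is a smoothness statement of the difficulty of \cite{balog} and \cite{bober}. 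I would therefore not let $n$ run over all integers but instead parametrise $n$ as a polynomial value, exactly as in the cubic theorem.

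Concretely, I would seek an auxiliary substitution $n=g(u)$ of degree $D\ge 2$ in an integer variable $u$ such that $g(u)^{m}-1$, viewed in $\mathbb{Z}[u]$ and after clearing content via $\tau$, factors into a fixed number of polynomials each of degree at most $D-1$ with eventually positive leading coefficients. Granting this, every factor evaluated at $u$ is $O(u^{D-1})=o(g(u))=o(n)$, while $n=g(u)$ has degree $D=(D-1)+1$; Lemma \ref{lemma 3.1} (applied with the argument variable $u$ and with $Q=g$) then gives $g(u)^{m}-1\mid g(u)!$, i.e. $n^{m}-1\mid n!$ for all large $u$, hence for infinitely many $n$. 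To construct such a substitution I would feed each cyclotomic block $\Phi_{d}$ of degree $\ge 3$ into Lemma \ref{lemma 3.2}, iterated as in the cubic argument, to obtain a $g_{d}$ under which $\Phi_{d}(g_{d}(\cdot))$ splits into pieces of degree $<\deg g_{d}$; the linear blocks $\Phi_{1}=x-1$ and $\Phi_{2}=x+1$, and the degree-two blocks (the divisors $d\in\{3,4,6\}$ with $\varphi(d)=2$, where Lemma \ref{lemma 3.2} does not apply) would be treated by the reducibility arguments already used for the quadratic proposition. The several substitutions are then forced to share one common value $t=g_{d}(u_{d})=n$ for every $d\mid m$, solved through simultaneous Pell equations $r^{2}-A_{d}C_{d}s^{2}=1$ and the pigeonhole synchronisation of the recurrences $s_{n+2}=2r_{1}s_{n+1}-s_{n}$ already exploited in the quartic proof.

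The hard part, I expect, is precisely this synchronisation: producing infinitely many $t$ simultaneously representable by all the $g_{d}$, since for general $m$ one no longer has the single equation $g(u)=h(v)$ of the cubic case but a whole system, and one must also guarantee that no residual factor, in particular $g(u)-1=\Phi_{1}(g(u))$, keeps degree $D$. I would attack the first point by taking all the $g_{d}$ of equal degree and compatible shape, so that the Pell discriminants $A_{d}C_{d}$ can be forced into a common pigeonhole residue and the solution sequences aligned along a single subsequence; I would handle the second by choosing $g$ with $g(u)-1$ already reducible. Finally, Corollary \ref{corollary 2.4} serves the same auxiliary purpose as in the quadratic proposition: whenever a factor lands only marginally below $n$ or coincides with another, the divergence of $\prod_{i}p_{i}/(p_{i}-1)$ furnishes a prime $q$ that can be peeled off to render the finitely many factors distinct and strictly smaller than $n$, after which Lemma \ref{lemma 3.1} closes the argument.
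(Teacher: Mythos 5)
Your diagnosis of the obstruction (the top factor $\Phi_m(n)$ blocks a direct use of Lemma \ref{lemma 3.1}) and your reformulation --- find a substitution $n=g(u)$ under which $g(u)^m-1$ splits into integer factors of degree $<\deg g$, then apply Lemma \ref{lemma 3.1} with $Q=g$ --- are exactly the skeleton of the paper's proof. But the construction you propose for $g$ has genuine gaps and is never carried out. First, Lemma \ref{lemma 3.2} requires a polynomial with \emph{positive} integer coefficients, and cyclotomic polynomials need not have them ($\Phi_1(x)=x-1$, $\Phi_6(x)=x^2-x+1$), so feeding the blocks $\Phi_d$ into it is not legitimate as stated. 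Second, and fatally, the ``synchronisation'' you yourself flag as the hard part is left open: the Pell/pigeonhole device of the quartic proof matches exactly \emph{two} quadratic substitutions through the single equation $g(u)=h(v)$, i.e.\ one Pell equation, and neither the paper nor your sketch provides any mechanism for producing infinitely many values simultaneously represented by an entire family $\{g_d : d\mid m\}$; worse, iterating Lemma \ref{lemma 3.2} on factors of degree up to $\varphi(m)$ generates a whole tree of substitutions, all of which would have to be matched at once. Finally, Corollary \ref{corollary 2.4} is not a device for ``peeling off'' a prime to separate coincident factors (nor is it used in the quadratic proposition, whose proof rests on Schur's theorem); it is a divergence statement about $\prod_j p_j/(p_j-1)$, and in the paper it plays the central role, not an auxiliary one.

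The idea you are missing is that the substitution can be taken to be a \emph{monomial}. Using Corollary \ref{corollary 2.4}, choose the first $t$ primes so that $\prod_{j\le t} p_j/(p_j-1) \gg m$, set $N=p_1p_2\cdots p_t$ and $n=s^{N}$. Then $n^m-1=(s^m)^{N}-1=\prod_{d\mid N}\Phi_d(s^m)$, a product of integer polynomials in $s$ whose degrees are $m\varphi(d)\le m\varphi(N)=mN\prod_{j\le t}\bigl(1-1/p_j\bigr) < N=\deg_s(n)$, the last inequality being precisely Mertens' theorem. Lemma \ref{lemma 3.1}, applied in the variable $s$ with $Q(s)=s^{N}$, immediately gives $n^m-1\mid n!$ for all large $s$. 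The ``top factor'' problem dissolves because the cyclotomic indices now run over divisors of $N$, which is squarefree with many prime factors, so every totient $\varphi(d)$ is a small fraction of $N$; no Pell equations, no synchronisation, and no use of Lemma \ref{lemma 3.2} are needed at all.
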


\begin{proof}
Applying the Corollary \ref{corollary 2.4}, we find that there exists a positive integer $t$ such that for the first $t$ prime numbers $2 = p_1 < p_2 < \ldots < p_t$, we have 
$$\prod_{j=1}^{t}{ \bigg(\dfrac{p_j}{p_j-1}}\bigg) \gg m.$$
In this case, by choosing $$n = s^{p_1p_2 \cdots p_t}$$ where $s$ is a sufficiently large positive integer, we obtain 
$$ n^{m}-1 = \left(s^{p_1p_2 \cdots p_t}\right)^{m}-1 = \left(s^{m}\right)^{p_1p_2 \cdots p_t}-1 = \prod_{d\mid p_1p_2\cdots p_t}{\Phi_d(s^{m})}.$$
From this, we see that we can express the polynomial $n^m-1$ as a product of polynomials. For each $d \mid p_1p_2 \cdots p_t$, we have $$\varphi(d) \le \varphi(p_1p_2 \cdots p_t)=(p_1-1)(p_2-1) \cdots (p_t-1).$$ so
\begin{align*}
\text{deg}\left(\Phi_d(x^{m})\right)=m\cdot\varphi(d) &\le m\cdot (p_1-1)(p_2-1)\dots (p_t-1)\\&=m \cdot \left(\prod_{j=1}^{t}\left(\dfrac{p_j-1}{p_j}\right)\right)p_1p_2 \cdots p_t \ll p_1p_2 \cdots p_t.
\end{align*}
Therefore, by applying Lemma \ref{lemma 3.1}, we prove the problem.
\end{proof} 
From the above discussion, we also see that if $f(x) = \Phi_m(x)$ is the $m^{\text{th}}$ cyclotomic polynomial, then $f(x)$ also belongs to $\mathcal{P}.$ Moreover, there always exist infinitely many $n$ such that $P^+(f(n)) < n^{\varepsilon}$ where $\varepsilon > 0.$ 

\subsection{Chebyshev polynomial} 
\begin{theorem}
Let $T_m(x)$ be the $m^{\text{th}}$ Chebyshev polynomial of the first kind. Then, $T_m(x) \in \mathcal{P}.$ 
\end{theorem}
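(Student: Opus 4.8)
The plan is to mirror the cyclotomic argument of the preceding proposition, using the composition law $T_m \circ T_K = T_{mK}$ in place of the substitution $s \mapsto s^{p_1\cdots p_t}$, and the factorization $2T_N(x/2) = \prod_{d\mid N,\, N/d\ \text{odd}}\psi_{4d}(x)$ in place of $x^N - 1 = \prod_{d\mid N}\Phi_d(x)$. First I would invoke Corollary \ref{corollary 2.4} to choose distinct odd primes $p_1 < \cdots < p_t$, none dividing $m$, with $\prod_{j=1}^{t} p_j/(p_j-1) > \varphi(4m)/2$, and set $K = p_1\cdots p_t$, so that $\gcd(K,2m)=1$. For a large positive integer $s$ I would then take $n = T_K(s)$, a positive integer, and observe by the composition property that $T_m(n) = T_m(T_K(s)) = T_{mK}(s)$.

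Next I would apply the product formula as a polynomial identity in $x$, after the substitution $x \mapsto 2x$:
$$2T_{mK}(x) = \prod_{\substack{d \mid mK\\ mK/d\ \text{odd}}}\psi_{4d}(2x),$$
so that $2T_m(n) = \prod_d \psi_{4d}(2s)$, each $\psi_{4d}(2x) \in \mathbb{Z}[x]$ being of degree $\varphi(4d)/2$. The largest factor corresponds to $d = mK$ (always admissible, since then $mK/d = 1$), of degree $\varphi(4mK)/2 = \tfrac{\varphi(4m)}{2}\prod_{j}(p_j - 1)$, where I used $\gcd(4m,K)=1$ to factor $\varphi(4mK)=\varphi(4m)\varphi(K)$. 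The crux is the degree comparison: by the choice of primes, $\tfrac{\varphi(4m)}{2}\prod_j(1 - 1/p_j) < 1$, so every factor has degree at most $\varphi(4mK)/2 < \prod_j p_j = K = \deg T_K$.

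Finally I would feed this into Lemma \ref{lemma 3.1}, taking the polynomials $P_i$ to be the $\psi_{4d}(2x)$ and $Q(x) = T_K(x)$, which has positive leading coefficient $2^{K-1}$ and degree $K$ strictly exceeding that of every $P_i$. The lemma then yields, for all sufficiently large $s$, that the integer product $\prod_d \psi_{4d}(2s) = 2T_m(n)$ divides $T_K(s)! = n!$; since $T_m(n) \mid 2T_m(n)$, we get $T_m(n)\mid n!$. Letting $s\to\infty$ produces infinitely many such $n$ (as $T_K$ is strictly increasing for $s \ge 1$), giving $T_m(x) \in \mathcal{P}$.

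I expect the main obstacle to be precisely the degree bound above — forcing the extreme factor $\psi_{4mK}(2x)$ to have degree below $K$ — which is exactly where Mertens' theorem, through Corollary \ref{corollary 2.4}, is indispensable, just as in the cyclotomic case. The secondary points to handle with care are the coprimality condition $\gcd(K,2m)=1$ that makes $\varphi(4mK)=\varphi(4m)\varphi(K)$ split cleanly, the verification that the $\psi_{4d}(2s)$ are integers (and positive for large $s$, since each $\psi_{4d}$ is monic in its variable), and the harmless spare factor of $2$.
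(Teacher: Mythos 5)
Your proposal is correct and follows essentially the same route as the paper's own proof: choosing the primes via Corollary \ref{corollary 2.4}, using $T_m(T_K(s)) = T_{mK}(s)$ together with the $\psi_{4d}$ factorization, bounding every factor's degree by $\varphi(4mK)/2 < K = \deg T_K$, and concluding with Lemma \ref{lemma 3.1}. The only differences are cosmetic (your threshold $\varphi(4m)/2$ versus the paper's $2\varphi(m)$, and your explicit handling of the spare factor of $2$ and of coprimality, which the paper leaves implicit).
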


\begin{proof}
From Corollary \ref{corollary 2.4}, we can select $ p_1<p_2<\ldots<p_t$ are the first $t$ prime numbers greater than $m$, such that $$\prod_{j=1}^{t}{ \bigg(\dfrac{p_j}{p_j-1}}\bigg) \gg 2\varphi(m).$$
At this point, we choose $n = T_{p_1p_2\cdots p_t}(s)$ with $s$ being a positive integer and set $P = mp_1p_2\cdots p_t$ so that we have 
\begin{align*}
T_m(n) &= T_m(T_{p_1p_2\cdots p_t}(s)) = T_{P}(s) \\ 
& = \dfrac{1}{2} \prod\limits_{d \mid P, P/d:\text{odd}}{\psi_{4d}(2s)} 
\end{align*}
On the other hand, for each $d \mid P$, we have $$\varphi(d) \le \varphi(mp_1p_2 \cdots p_t)=\varphi(m) (p_1-1)(p_2-1) \cdots (p_t-1)$$ so we find that
\begin{align*}
\text{deg}\left(\psi_{4d}(2x)\right)=\dfrac{\varphi(4d)}{2}&\le 2\varphi(m)\cdot (p_1-1)(p_2-1)\dots (p_t-1)\\&=2\varphi(m) \cdot \left(\prod_{j=1}^{t}\left(\dfrac{p_j-1}{p_j}\right)\right)p_1p_2 \cdots p_t \ll p_1p_2 \cdots p_t.
\end{align*}
Thus, with the chosen $n$, we see that $T_m(x)$ can be factored into polynomials with degrees smaller than that of the polynomial $T_{p_1p_2\cdots p_t}(x)$. Therefore, using Lemma \ref{lemma 3.1}, we will obtain the desired result for the problem. 
\end{proof}
Also, from choosing $\prod\limits_{j=1}^{t}{ \bigg(\dfrac{p_j}{p_j-1}}\bigg) \gg 2\varphi(m)$ and the reasoning above, we conclude that for each Chebyshev polynomial of the first kind $T(x)$ and $\varepsilon > 0$, there exist infinitely many positive integers $n$ such that $P^+(T(n)) < n^{\varepsilon}.$ By a similar approach, we find that the product of Chebyshev polynomials also belongs to $\mathcal{P},$ as proven in the theorem below.
\begin{theorem}
Given $k$ Chebyshev polynomials of the first kind, denoted as $T_{m_1}(x),$ 

$T_{m_2}(x),\ldots,T_{m_k}(x)$. Then, $T(x) = T_{m_1}(x)T_{m_2}(x)\cdots T_{m_k}(x) \in \mathcal{P}.$
\end{theorem}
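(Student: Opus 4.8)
The plan is to run exactly the argument used for a single Chebyshev polynomial, but simultaneously for all $k$ factors; the only new feature is that the auxiliary primes must be chosen to control the degrees of all $k$ polynomials at once. First I would apply Corollary \ref{corollary 2.4} to fix primes $p_1 < p_2 < \cdots < p_t$, each larger than every $m_i$, such that
$$\prod_{j=1}^{t}\left(\frac{p_j}{p_j-1}\right) \gg 2\max_{1\le i \le k}\varphi(m_i).$$
I would then set $n = T_{p_1 p_2 \cdots p_t}(s)$ for a positive integer $s$, just as in the single-polynomial case, and abbreviate $P_i = m_i\, p_1 p_2 \cdots p_t$.

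Next, using the composition law $T_{m_i}\big(T_{p_1\cdots p_t}(s)\big) = T_{P_i}(s)$ together with the product formula
$$T_{P_i}(s) = \frac{1}{2}\prod_{\substack{d \mid P_i \\ P_i/d:\,\text{odd}}}\psi_{4d}(2s),$$
I would realize each $T_{m_i}(n)$ as a product of the integer polynomials $\psi_{4d}(2s)$. For every divisor $d \mid P_i$ one has $\varphi(d) \le \varphi(m_i)(p_1-1)\cdots(p_t-1)$, and since $\varphi(4d) \le 4\varphi(d)$, every factor satisfies
$$\deg\big(\psi_{4d}(2x)\big) = \frac{\varphi(4d)}{2} \le 2\varphi(m_i)\left(\prod_{j=1}^{t}\frac{p_j-1}{p_j}\right)p_1 p_2 \cdots p_t < p_1 p_2 \cdots p_t,$$
where the final strict inequality is precisely the content of the choice of primes above.

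Finally, I would gather the finitely many factors $\psi_{4d}(2x)$ coming from all $k$ products into a single family $P_1(x),\ldots,P_r(x)$, each of degree at most some fixed $d^{\ast} < p_1 p_2 \cdots p_t = \deg T_{p_1\cdots p_t}$, and apply Lemma \ref{lemma 3.1} with $Q(x) = T_{p_1\cdots p_t}(x)$. This produces an $N$ so that for all $s > N$ the product of all these factors divides $T_{p_1\cdots p_t}(s)! = n!$. Since $T(n) = \prod_{i=1}^{k} T_{m_i}(n)$ equals $2^{-k}$ times that product, $T(n)$ is an integer dividing it, and hence $T(n) \mid n!$ for infinitely many $n$ as $s$ ranges over integers exceeding $N$.

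The step I expect to be the main obstacle is guaranteeing the degree bound uniformly: the single inequality $\prod_{j} p_j/(p_j-1) > 2\max_i \varphi(m_i)$ must force every factor $\psi_{4d}(2x)$, across all $k$ indices and all admissible divisors $d$, to have degree strictly below $\deg T_{p_1\cdots p_t}$. Once this is secured, via Corollary \ref{corollary 2.4} and the elementary estimate $\varphi(4d)\le 4\varphi(d)$, the rest is a direct invocation of Lemma \ref{lemma 3.1}, so no essentially new difficulty arises beyond the bookkeeping of combining the $k$ factorizations into one family.
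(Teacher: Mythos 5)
Your proposal is correct and follows essentially the same route as the paper: choose the auxiliary primes via Corollary \ref{corollary 2.4} to dominate $2\varphi(m_i)$ for every $i$, set $n = T_{p_1\cdots p_t}(s)$, factor each $T_{m_i}(n) = T_{m_i p_1\cdots p_t}(s)$ into the polynomials $\psi_{4d}(2s)$, bound every degree strictly below $p_1\cdots p_t = \deg T_{p_1\cdots p_t}$, and invoke Lemma \ref{lemma 3.1}. Your one small deviation is actually an improvement in precision: you require $\prod_j p_j/(p_j-1) \gg 2\max_i \varphi(m_i)$, whereas the paper uses $2\varphi(l)$ with $l = \max_i m_i$, which is not quite the right quantity since $\varphi$ is not monotone.
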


\begin{proof}
We set $l = max \{m_1,m_2,\ldots, m_k\}$ and choose the first $t$ prime numbers $p_1,p_2,\ldots, p_t$ that sastify $$\prod_{j=1}^{t}{ \bigg(\dfrac{p_j}{p_j-1}}\bigg) \gg 2\varphi(l).$$
We take $n = T_{p_1p_2\cdots p_t}(m)$ with $m$ being a positive integer. Then we have:
\begin{align*}
T(n) &= \prod\limits_{1\le i \le k} T_{m_i}(T_{p_1p_2\cdots p_t}(m)) \\
& = \prod\limits_{1 \le i \le k } T_{m_i \cdot p_1p_2\cdots p_t}(m) \\ 
& = \dfrac{1}{2^k} \prod\limits_{1\le i \le k} \prod_{\substack{d\mid P_i \\ P_i/d : \text{odd}}} \psi_{4d}(m) 
\end{align*}
where $P_i = m_i \cdot p_1p_2\cdots p_t$
\newline 
We consider for each $d \mid P_i$ for all $i = \overline{1,k}$, we have  
\begin{align*}
\text{deg}\left(\psi_{4d}(2x)\right)=\dfrac{\varphi(4d)}{2}&\le 2\varphi(m_i)\cdot (p_1-1)(p_2-1)\dots (p_t-1)\\
&=2\varphi(m_i) \cdot \left(\prod_{j=1}^{t}\left(\dfrac{p_j-1}{p_j}\right)\right)p_1p_2 \cdots p_t \ll p_1p_2 \cdots p_t.
\end{align*}
Therefore, we see that $T(x)$ is the product of polynomials with degrees less than the degree of the polynomial $T_{p_1p_2\cdots p_t}(x).$ By applying Lemma \ref{lemma 3.1}, we achieve the desired result for the problem.
\end{proof}
To conclude, based on \cite{masakazu}, we have the factorization of  Chebyshev polynomials of types 2, 3, and 4 is similar to that of type 1. Thus, by using a similar approach, we also have $T(x) \in \mathcal{P}$ and $T(x)$ satisfy $P^+(T(n)) < n^{\varepsilon}$ for infinitely many positive integers $n$, where $T(x)$ represents the three aforementioned types of polynomials.

\end{document}